\theoremstyle{plain}
\newtheorem{theorem}{Theorem}[section]
\newtheorem{proposition}[theorem]{Proposition}
\newtheorem{corollary}[theorem]{Corollary}
\theoremstyle{definition}
\newtheorem{example}[theorem]{Example}
\theoremstyle{remark}
\newcommand{\N}{\mathbb{N}}                    
\newcommand{\Z}{\mathbb{Z}}                    
\newcommand{\Q}{\mathbb{Q}}                    
\newcommand{\R}{\mathbb{R}}                    
\newcommand{\C}{\mathbb{C}}                    
\renewcommand{\P}{\mathbb{P}}                    %
\renewcommand{\O}{\mathcal{O}}                    %
\newcommand{\ba}{{\bf a}}
\newcommand{\bv}{{\bf v}}
\newcommand{\bw}{{\bf w}}
\newcommand{\be}{{\bf e}}
\newcommand{\Proj}{\operatorname{Proj}}
\newcommand{\im}{\operatorname{im}}
\numberwithin{equation}{section}
\title{Refined Ehrhart series and bigraded rings}
\author{Praise Adeyemo}
\address{Faculty of Science, University of Ibadan, Ibadan, Oyo State, Nigeria}
\email{ph.adeyemo@mail1.ui.edu.ng}
\author{Bal\'{a}zs Szendr\H{o}i}
\address{Mathematical Institute, University of Oxford, Oxford OX2 6GG, United Kingdom}
\email{szendroi@maths.ox.ac.uk}
\begin{document}
\begin{abstract} We study a natural set of refinements of the Ehrhart series of a closed polytope, first considered by Chapoton. We compute the refined series in full generality for a simplex of dimension $d$, a cross-polytope of dimension $d$, respectively a hypercube of dimension $d\leq 3$, using commutative algebra. We deduce summation formulae for products of $q$-integers with different arguments, generalizing a classical identity due to MacMahon and Carlitz. We also present a characterisation of a certain refined Eulerian polynomial in algebraic terms. 
\end{abstract}

\maketitle

\section*{Introduction} 

Given a $d$-dimensional lattice polytope $\Sigma$, the number of lattice points in its $r$-fold dilation is given by a polynomial in $r$, its Ehrhart polynomial. 
The generating function of the number of lattice points as $r$ varies gives rise to a related notion, the Ehrhart series of $\Sigma$. On the other hand, the polytope $\Sigma$
can also be used to define a projective toric variety $X_\Sigma$ which, under a further assumption, comes with a natural embedding into projective space. The Ehrhart polynomial 
and Ehrhart series of $\Sigma$ get identified with the Hilbert polynomial and Hilbert series of this embedding. 

Recently Chapoton~\cite{Chapoton} generalised Ehrhart theory by counting lattice points in slices defined by hyperplanes orthogonal to a fixed direction, giving a one-parameter
refinement of the Ehrhart series of $\Sigma$. 
In this short paper, we observe that Chapoton's refinement corresponds on the geometric side to a natural bigrading on the projective coordinate
ring of~$X_\Sigma$. We use this observation to calculate the refined Ehrhart series, for all possible choices of slicing direction, for the standard simplex and cross-polytope of arbitrary dimension~$d$, as well as for a
hypercube of dimension $d\leq 3$. Our main results are Propositions~\ref{prop_simplex}, \ref{prop:cross}, \ref{prop_two_terms} and~\ref{prop_three_terms}. We explain why a general formula for the refined Ehrhart series of a hypercube of dimension $d>3$ is not really of interest. We also use these ideas to formulate in Proposition~\ref{conj:P1P1} an interpretation of a refined Eulerian polynomial, due to MacMahon and Carlitz, in terms of commutative algebra. 

The projective coordinate ring of~$X_\Sigma$ is in fact naturally graded by $\Z\oplus\Z^d$, compare e.g.~\cite[Sect.13.3]{michalek_s}. It is straightforward to write out generalizations
of the formulae of this note in the multi-graded case. We chose to focus on the bigraded case, as that corresponds to Chapoton's refinement, and also that is 
the case that (sometimes) gives simplified expressions, such as the one corresponding to the refined Eulerian polynomial. 

\subsection*{Notation} We work in the fixed lattice $\Z^d$, with standard basis vectors $\be_1, \ldots, \be_d$ orthonormal with respect to the inner product $(\bv, \bw)\mapsto \bv\cdot\bw$. 
A $d$-dimensional lattice polytope $\Sigma\subset\R^d$ is a compact convex polytope with vertices contained in our lattice $\Z^d\subset\R^d$.

\subsection*{Acknowledgements} We would like to thank Miles Reid, John Sha\-reshian, Anna Seigal, Bernd Sturmfels and especially Mateusz Michalek for useful correspondence and comments. The ideas in this paper were formulated during a visit of P. A.~to the University of Oxford, supported by EPSRC GCRF grant EP/T001968/1, part of the Abram Gannibal Project. 

\section{Standard definitions and examples}

\subsection{Polytopes and Ehrhart theory}
Let $\Sigma\subset\R^d$ be a $d$-dimensional lattice polytope. For a non-negative integer~$r$, let $r\Sigma\subset\R^d$ denote the $r$-fold dilation of $\Sigma$. 
Then it is known that there exists a polynomial, 
the {\em Ehrhart polynomial} $L_\Sigma\in \Q[x]$, with the property that for all non-negative integers $r$, we have
\[ \#\{ r\Sigma \cap \Z^d\} = L_\Sigma(r).\]
We can also consider the {\em Ehrhart series}
\[ E_\Sigma(t) = \sum_{r=0}^\infty L_\Sigma(r) t^r. 
\]
Then it is known that
\[ E_\Sigma(t) = \frac{H_\Sigma(t)}{(1-t)^{d+1}},
\]
where $H_\Sigma(t)$ is a polynomial of degree $d$ with non-negative integer coefficients. 

\begin{example} \label{ex_simplex}
Let $\Sigma=\Delta_d\subset\R^d$ be the unit $d$-simplex spanned by the standard basis vectors of $\R^d$. The Ehrhart polynomial is
\[ L_{\Delta_d} (r) = {r+d \choose d},
\]
whereas the Ehrhart series is
\[ E_{\Delta_d}(t) = \frac{1}{(1-t)^{d+1}}.
\]
\end{example}

\begin{example}\label{ex_cross} Let $\Sigma=\Xi_d$ be the $d$-dimensional cross-polytope, the convex hull of all unit coordinate vectors and their opposites. 
Then (see for example~\cite[Thm.2.7]{beck}), the Ehrhart polynomial of $\Xi_d$ is
\[ L_{\Xi_d}(r) = \sum_{k=0}^d  {d \choose k} {r-k+d \choose d}.
\]
The corresponding Ehrhart series is
\begin{equation}  E_{\Xi_d}(r) = \frac{(1+t)^d}{(1-t)^{d+1}}.
\label{eq_cross_E}
\end{equation}
\end{example}

\begin{example} 
\label{ex_cube} Let $\Sigma=C_d$ be the unit hypercube of dimension $d$. The Ehrhart polynomial is clearly
\[ L_{C_d}(r) = (1+r)^d.
\]
Thus the Ehrhart series is 
\begin{equation}\label{eq_cu} E_{C_d}(t) = \sum_{r=0}^\infty (1+r)^d t^r  = \frac{A_d(t)}{(1-t)^{d+1}},
\end{equation}
where $A_d(t)$ is the $d$-th Eulerian polynomial, introduced by Euler to sum this very series. The first few examples are 
\[ A_2(t)=1+t, \ A_3(t)= 1 + 4t + t^2, \ A_4(t) =  1+ 11t + 11t^2 + t^3, \] and
\[ A_5(t) = 1 + 26 t + 66 t^2 + 26 t^3 + t^4.\]
All nontrivial coefficients of the Eulerian polynomial $A_d(t)$ are positive integers. In fact, as is well known,
\[ A_d(t)= \sum_{\sigma\in S_d} t^{\mathrm{desc}(\sigma)}, 
\] 
where for a permutation $\sigma\in S_d$ of $\{1,\ldots, d\}$, $\mathrm{desc}(\sigma)\in\N$ denotes the number of {\em descents} of~$\sigma$, the number of
positions $i < d$ with $\sigma(i) > \sigma(i+1)$. 
\end{example}

\subsection{Toric geometry of polytopes}

Given a convex lattice polytope $\Sigma\subset\R^d$, consider the $\C$-vector space
\[
R_\Sigma :=  \C\langle x_{\bv,r} \,|\, r\geq 0, \bv \in r\Sigma \cap \Z^d\rangle.
\]
This becomes a $\C$-algebra by the multiplication rule
\[
 x_{\bv,r}\cdot x_{\bv',r'} := x_{\bv+\bv', r+r'}.
\]
It is moreover graded, with $r$-th graded piece $R_{\Sigma,r}$ generated by the variables $x_{\bv,r}$ for $\bv \in r\Sigma \cap \Z^d$;
in particular $R_{\Sigma,0}\cong\C$. By definition, $E_\Sigma(t)$ is the Hilbert series of the graded algebra $R_\Sigma$. 

Recall that the lattice polytope $\Sigma$ is called {\em normal}, if for any $r>1$, every lattice point $\bv \in r\Sigma \cap \Z^d$ can be written as a vector sum $\bv=\sum_{i=1}^r \bv_i$ of $r$ lattice points
$\bv_i\in \Sigma \cap \Z^d$. 
If $\Sigma$ is normal, then $R_\Sigma$ is generated as a $\C$-algebra by its grade-$1$ piece
$R_{\Sigma,1}$. Let 
\[
S_\Sigma :=  \C[x_{\bv,1} \,|\, \bv \in \Sigma \cap \Z^d]
\]
be the {\em free} graded algebra on the lattice points in $\Sigma$. 

Consider $X_\Sigma=\Proj R_\Sigma$, a $d$-dimensional projective toric variety. If $\Sigma$ is normal, we have a toric embedding (compare~\cite[Chap. 14]{sturmfels})
\[ \nu_\Sigma\colon X_\Sigma = \Proj R_\Sigma \hookrightarrow \Proj S_\Sigma\cong \P^{\#\{ \Sigma \cap \Z^d\}-1}.
\]
In this case, the Ehrhart polynomial $L_\Sigma$ can be identified with the Hilbert polynomial of~$X_\Sigma$ in this embedding, and thus its Hilbert
series gets identified with the Ehrhart series of~$\Sigma$.  

\begin{example} For $\Sigma=\Delta_d\subset\R^d$ the standard $d$-simplex with $d+1$ vertices and no interior lattice point, the graded algebra $R_{\Delta_d}$ is the free polynomial algebra
\[R_{\Delta_d} = \C[x_{\bv,1} \mid  \bv \in \Delta_d \cap \Z^d]\cong \C[x_0, \ldots, x_d] \]
generated by variables corresponding to the origin, respectively the other vertices.
The attached projective embedding is $\nu_{\Delta_d}\colon X_{\Delta_d}\cong\P^d$.
\end{example}

\begin{example} \label{ex_toric_cube}
Consider the unit hypercube $\Sigma=C_d$ of dimension $d$, with $2^d$ vertices and no interior lattice point. As this is also normal, the graded algebra $R_{C_d}$ is still generated by degree one variables. 
It is known that the surjection $S_{C_d}\rightarrow R_{C_d}$ has a kernel
\[I_{C_d} = \langle x_{\bv,1} x_{\bv',1}-x_{\bw,1}x_{\bw',1}: \bv+\bw' = \bw + \bv'\rangle \lhd S_{C_d}
\]
generated by quadratic relations. We get the embedding 
 \[\nu_d\colon X_{C_d} \cong (\P^1)^d\hookrightarrow \Proj S_d\cong\P^{2^d-1}\] 
of the projective toric variety $(\P^1)^d$ by its complete linear series $\O(1,\ldots, 1)$. 

It will be important below to spell out some special cases. For $d=2$, we have the unit square as our polytope. Re-labelling degree-$1$ variables for easier readability, the algebra~$R_{C_2}$ is
\[ R_{C_2} \cong \C[x_{00},x_{01},x_{10},x_{11}]/ \langle x_{00}x_{11}-x_{01}x_{10}\rangle,
\]
corresponding to the Segre embedding $\nu_{C_2}\colon (\P^1)^2\hookrightarrow\P^3$, with Ehrhard--Hilbert series
\[E_{C_2}(t) =  \frac{1+ t}{(1-t)^3} =\frac{1-  t^2}{(1-t)^4}.
\]

For $d=3$, and $\Sigma=C_3$, the unit cube, the algebra has a more interesting structure. Again, re-labelling the variables in an obvious way, we have
\[ R_{C_3} \cong \C[x_{000},x_{001},x_{010},x_{011}, x_{100},x_{101},x_{110},x_{111}]/ I_{C_3},
\]
where $I_{C_3}$ a codimension $4$ ideal, the ideal of the embedding $\nu_{C_3}\colon (\P^1)^3\hookrightarrow\P^7$. It has a $9\times 16$ Gorenstein resolution: the $8$ generators satisfy $9$ quadric relations related by $16$ cubic syzygies, as evidenced by the Ehrhart--Hilbert series
\begin{equation}E_{C_3}(t) =  \frac{1+ 4t + t^2}{(1-t)^4} =\frac{1- 9 t^2 + 16 t^3-9t^4+t^6}{(1-t)^8}.
\label{ringC3}
\end{equation}
\end{example}

\begin{example} 
Finally consider $\Sigma=\Xi_d$ be the $d$-dimensional cross-polytope, once again a normal polytope, with $2d$ vertices and one interior lattice point, the origin. 
Then it is clear from basic linear algebra that all relations between the degree-$1$ variables are generated by the relations
\[ x_{\be_j, 1} x_{-\be_j,1} = x_{{\mathbf 0},1}^2,\]
one for each basis vector $\be_j\in\R^d$.  Re-labellling the degree-$1$ variables as $x_0$ for the origin, $x_{01}, \ldots, x_{0d}$ for the positive basis vectors, and $x_{11}, \ldots, x_{1d}$ for the negative ones, we thus have 
\[ R_{\Xi_d} \cong \C[x_{0},x_{01},\ldots, x_{0d}, x_{11},\ldots x_{1d}]/ \langle x_{0j}x_{1j}-x_0^2 : 1\leq j \leq d\rangle.
\] 
The corresponding toric embedding is 
\[\nu_{\Xi_d} \colon X_{\Xi_d} \hookrightarrow \Proj \C[x_{0},x_{01},\ldots, x_{0d}, x_{11},\ldots x_{1d}]\cong \P^{2d}
\]
presenting $X_{\Xi_d}$ as a complete intersection of $d$ quadric hypersurfaces in $\P^{2d}$. The Ehrhart--Hilbert series is now
\[  E_{\Xi_d}(r) = \frac{(1+t)^d}{(1-t)^{d+1}} =  \frac{(1-t^2)^d}{(1-t)^{2d+1}} .
\]
In the special case $d=2$, we get 
\[\nu_{\Xi_2} \colon X_{\Xi_2} \cong\left\{   x_{01}x_{11}= x_{02}x_{12}=x_0^2 \right\} \subset  \P^4,
\]
a canonical del Pezzo surface of degree $4$ with four $A_1$ singularities. 
\end{example}

\section{The refinement}

\subsection{The definition}

We follow Chapoton~\cite{Chapoton}. Fix a $d$-tuple of integer weighs ${\bf a}=(a_1, \ldots, a_d)$. 
For integers $k$, consider the quantity
\[ L^\ba_\Sigma(r,k) = \#\{ v \in r\Sigma \cap \Z^d : \bv\cdot {\ba} =k \}.\]
In other words, we are slicing the polytope with hyperplanes perpendicular to the direction~$\ba$, and counting lattice points in the different layers. We can then set
\[ L^\ba_{\Sigma,r}(q) = \sum_{k\in\Z} L_\Sigma(r,k)q^k,\]
a Laurent polynomial in a new variable $q$, and consider the series
\[ E^\ba_{\Sigma}(t,q) = \sum_{r=0}^\infty \sum_{k\in\Z} L_\Sigma(r,k) \, q^k t^r.
\]
Setting $q=1$, these specialise to the value at $r$ of the Ehrhart polynomial and the Ehrhart series, respectively. Chapoton  \cite{Chapoton} proves that under a non-negativity assumption as well as genericity conditions on the weights, 
the refined Ehrhard series is a rational function in $(t,q)$ with a controlled denominator. We will not need these non-negativity and genericity assumptions in what follows. 

From the point of view of the algebra $R_\Sigma$, the weights furnish it with a bigrading, with the generator $x_\bv$ for $\bv \in r\Sigma \cap \Z^d$ having degree $(r,\bv\cdot \ba)$. By definition, the series $E^\ba_{\Sigma}(t,q)$ is the Hilbert series of this bigraded algebra. 

\subsection{The simplex} Return to $\Sigma=\Delta_d$, the unit $d$-simplex. Choose first $\ba=(1,\ldots, 1)$. This is a non-generic choice in the sense of~\cite{Chapoton}: we are slicing the $d$-simplex into parallel regular $(d-1)$-simplices. We clearly have
\[ L^{(1,\ldots, 1)}_{\Delta_d}(r,k) = {k+d-1 \choose d-1} \mbox{ for } 0\leq k \leq r.\]
The polynomial
\[
L^{(1,\ldots, 1)}_{\Delta_d,r}(q) = \sum_{k=0}^r {k+d-1 \choose d-1}q^k 
\]
specialises at $q=1$ to the binomial coefficient ${r+d \choose d}$ by standard rules of the Pascal triangle. 
It easy to compute~\cite{Adeyemo}, by elementary methods, the corresponding refined Ehrhart series
\[E^{(1,\ldots, 1)}_{\Delta_d}(t,q) = \frac{1}{(1-t)(1- qt)^{d}}.
\]

Next choose $\ba=(1,2, \ldots, d)$, a generic choice of slicing. There is no short expression for the quantity $L^{(1,2, \ldots, d)}_{\Delta_d}(r,m)$. On the other hand, the polynomial
\begin{equation} L^{(1,2, \ldots, d)}_{{\Delta_d},r}(q) =\left[ r+d \choose d\right]_q \mbox{ for } 0\leq k \leq r
\label{eq:gauss}
\end{equation}
is known to be~\cite{Stanley, Adeyemo} a $q$-binomial coefficient. So in this case, the substitution $q=1$ specialises the $q$-binomial coefficient to the standard one. 


It is easy to write down the refined Eckhart series of the $d$-simplex in full generality.

\begin{proposition} For an arbitrary $d$-tuple of integers ${\bf a}=(a_1, \ldots, a_d)$, we have
\[E^\ba_{\Delta_d}(t,q) = (1-t)^{-1}\prod_{j=1}^d(1-q^{a_j}t)^{-1}.\]
\label{prop_simplex}
\end{proposition}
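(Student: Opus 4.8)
The plan is to exploit the fact, recalled in the excerpt, that $R_{\Delta_d}$ is the free polynomial algebra $\C[x_0, x_1, \ldots, x_d]$, where $x_0 = x_{\mathbf 0, 1}$ corresponds to the origin and $x_j = x_{\be_j, 1}$ corresponds to the $j$-th standard basis vector, for $1 \le j \le d$. The weight vector $\ba = (a_1, \ldots, a_d)$ equips this algebra with a bigrading: since $\mathbf 0 \cdot \ba = 0$, the generator $x_0$ has bidegree $(1, 0)$, while $\be_j \cdot \ba = a_j$, so $x_j$ has bidegree $(1, a_j)$. By the definition at the end of the previous subsection, $E^\ba_{\Delta_d}(t, q)$ is precisely the Hilbert series of this bigraded polynomial ring, where $t$ tracks the first grading (the $\Z$-grading by total degree $r$) and $q$ tracks the second grading (the slicing value $k$).

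Next I would use the standard multiplicative property of Hilbert series of polynomial rings: if a polynomial ring is a tensor product of one-variable polynomial rings $\C[x_i]$ with $x_i$ in bidegree $(r_i, k_i)$, then its bigraded Hilbert series is the product $\prod_i \bigl(1 - t^{r_i} q^{k_i}\bigr)^{-1}$, since $\C[x_i]$ has basis $\{x_i^m\}_{m \ge 0}$ contributing $\sum_{m \ge 0} (t^{r_i} q^{k_i})^m = (1 - t^{r_i} q^{k_i})^{-1}$. Here $\C[x_0, \ldots, x_d] \cong \C[x_0] \otimes_\C \C[x_1] \otimes_\C \cdots \otimes_\C \C[x_d]$ as bigraded algebras, so the contribution of $x_0$ is $(1-t)^{-1}$ and the contribution of each $x_j$, $1 \le j \le d$, is $(1 - q^{a_j} t)^{-1}$. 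Multiplying these together yields exactly
\[
E^\ba_{\Delta_d}(t, q) = (1-t)^{-1} \prod_{j=1}^d (1 - q^{a_j} t)^{-1},
\]
as claimed.

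There is essentially no obstacle here: the only point requiring any care is the bookkeeping of bidegrees, in particular noting that the origin $\mathbf 0 \in \Delta_d \cap \Z^d$ is among the lattice points and gives the factor $(1-t)^{-1}$ independent of $\ba$, while each vertex $\be_j$ contributes a factor twisted by $q^{a_j}$. One could alternatively give a fully elementary verification by observing that a lattice point of $r\Delta_d$ is an integer vector $\bv = (v_1, \ldots, v_d)$ with $v_j \ge 0$ and $\sum_j v_j \le r$, writing $v_0 = r - \sum_j v_j \ge 0$, so that lattice points of $r\Delta_d$ are in bijection with tuples $(v_0, v_1, \ldots, v_d) \in \Z_{\ge 0}^{d+1}$ with $\sum_{j=0}^d v_j = r$; since $\bv \cdot \ba = \sum_{j=1}^d a_j v_j$, we get
\[
E^\ba_{\Delta_d}(t,q) = \sum_{\substack{v_0, \ldots, v_d \ge 0}} t^{v_0 + \cdots + v_d} q^{\sum_{j=1}^d a_j v_j} = \Bigl(\sum_{v_0 \ge 0} t^{v_0}\Bigr) \prod_{j=1}^d \Bigl(\sum_{v_j \ge 0} (q^{a_j} t)^{v_j}\Bigr),
\]
which is the same product. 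I would present the algebraic argument as the main proof, since it fits the paper's stated philosophy of computing refined Ehrhart series via commutative algebra, and perhaps remark on the elementary reformulation.
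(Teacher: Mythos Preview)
Your proof is correct and follows exactly the paper's approach: identify $R_{\Delta_d}$ as the free polynomial ring on generators of bidegrees $(1,0)$ and $(1,a_j)$, and read off the bigraded Hilbert series as the product of the corresponding geometric series. The paper's proof is just a terser version of your main argument, without the explicit tensor-product justification or the alternative elementary lattice-point computation.
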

\begin{proof}  As noted before, the $\C$-algebra $R_{\Delta_d}$ is freely generated by the variables $x_0, x_1, \ldots, x_d$ corresponding to the lattice points
${\mathbf 0}, \be_1, \ldots, \be_d$ of $\Delta_d$. The bigraded degree of these generators is $(1,0)$, respectively $(1, a_j)$ for $j=1, \ldots, d$. The statement
follows. 
\end{proof}

For the specific weights $\ba=(1,2, \ldots, d)$, we get 
\[E^{(1,2, \ldots, d)}_{\Delta_d}(t,q) = \prod_{j=0}^d(1-q^{j}t)^{-1} = \sum_{r=0}^\infty  \left[ k+d-1 \choose d-1\right]_q  t^r
\]
where the second equality is one form of the $q$-binomial theorem, agreeing with~\eqref{eq:gauss}.

\subsection{The cross-polytope} Let $\Sigma=\Xi_d$, the $d$-dimensional cross-polytope. The next result is also straightforward. 
\begin{proposition} \label{prop:cross}
For an arbitrary $d$-tuple of integers ${\bf a}=(a_1, \ldots, a_d)$, we have
\begin{equation}
E^\ba_{\Xi_d}(t,q) = \frac{(1+t)^d(1-t)^{d-1}}{\displaystyle\prod_{j=1}^d(1-q^{a_j}t)\prod_{j=1}^d(1-q^{-a_j}t)} .\label{eq:crosspol}\end{equation} 
\label{eq_simplex}
\end{proposition}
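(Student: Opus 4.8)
The plan is to exploit the complete-intersection presentation of $R_{\Xi_d}$ recorded above, simply keeping track of the extra grading variable $q$. First I would write down the bigraded degrees of the generators under the weights $\ba$: the variable $x_0$ (the origin) has bidegree $(1,0)$, each variable $x_{0j}$ (the vertex $\be_j$) has bidegree $(1, a_j)$, and each variable $x_{1j}$ (the vertex $-\be_j$) has bidegree $(1, -a_j)$. Consequently the free bigraded algebra $S_{\Xi_d}=\C[x_0,x_{01},\ldots,x_{0d},x_{11},\ldots,x_{1d}]$ has Hilbert series
\[ \frac{1}{(1-t)\prod_{j=1}^d(1-q^{a_j}t)(1-q^{-a_j}t)}. \]

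Next I would observe that each defining relation $x_{0j}x_{1j}-x_0^2$ is bihomogeneous: both monomials have bidegree $(2,a_j-a_j)=(2,0)$, so the quadric relations live in bidegree $(2,0)$ regardless of $\ba$. Since $X_{\Xi_d}$ was identified above as a complete intersection of these $d$ quadrics, the sequence $x_{01}x_{11}-x_0^2,\ldots,x_{0d}x_{1d}-x_0^2$ is a regular sequence in $S_{\Xi_d}$, and refining the $\Z$-grading to a $\Z^2$-grading does not change the ring, the ideal, or this property. Quotienting a graded ring by a regular sequence of bihomogeneous elements multiplies the Hilbert series by the product of the factors $\bigl(1-(\text{monomial of the corresponding bidegree})\bigr)$, here $\prod_{j=1}^d(1-q^0t^2)=(1-t^2)^d$. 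Therefore
\[ E^\ba_{\Xi_d}(t,q) = \frac{(1-t^2)^d}{(1-t)\prod_{j=1}^d(1-q^{a_j}t)(1-q^{-a_j}t)}. \]

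Finally I would simplify using $(1-t^2)^d=(1-t)^d(1+t)^d$ and cancel one factor of $(1-t)$ against the denominator to reach \eqref{eq:crosspol}; as a sanity check, setting $q=1$ collapses this to $(1+t)^d(1-t)^{-d-1}$, recovering \eqref{eq_cross_E}. I do not anticipate a serious obstacle here: the only point that deserves an explicit sentence is that the regular-sequence property, and hence the ``multiply by $(1-t^2)^d$'' rule for the Hilbert series, survives the passage from the single grading to the bigrading — which it does, precisely because the underlying ring and ideal are unchanged once bihomogeneity of the relations has been checked.
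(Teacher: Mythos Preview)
Your argument is correct and follows essentially the same route as the paper: identify the bidegrees of the $2d+1$ generators, observe that the $d$ quadric relations are bihomogeneous of bidegree $(2,0)$ and form a regular sequence, and read off the bigraded Hilbert series, then simplify $(1-t^2)^d/(1-t)$ to $(1+t)^d(1-t)^{d-1}$. Your write-up is in fact more explicit than the paper's about why the regular-sequence property passes to the bigrading, but the substance is the same.
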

\begin{proof}  As discussed in Example~\ref{ex_cross}, the $\C$-algebra $R_{\Xi_d}$ is generated by $x_0$ of bigraded degree $(1,0)$, and generators $x_{0j}, x_{1j}$ of bigraded degree $(1, a_j)$, respectively $(1, -a_j)$ for $j=1, \ldots, d$. There are also $d$ relations of bidegree $(2,0)$ which form a regular sequence. The statement follows after factoring the numerator in the resulting Hilbert series, and cancelling one factor of $(1-t)$.  
\end{proof}
Notice that the only time the right hand side of~\ref{eq:crosspol} simplifies further is if there exists $1\leq j \leq d$ with $a_j=0$. Apart from the trivial case $\ba={\mathbf 0}$, the only time all negative terms in the numerator cancel is 
\[
E^{(0,\ldots,0,1)}_{\Xi_d}(t,q) = \frac{(1+t)^d}{(1- qt)(1-q^{-1}t)(1-t)^{d-1}}.\] 
In this case, we do {\em not} get $q$-deformations of the numerator of~\eqref{eq_cross_E}. Indeed, recent work of Tielker~\cite{Tielker} uses a different, non-linear 
refined count of lattice points inside dilations of the cross-polytope to get an interesting $q$-refinement of its Ehrhart numerator. Our last example will behave very differently in this regard. 

\subsection{The hypercube} 

Finally consider $\Sigma=C_d$, the unit $d$-hypercube. At the $r$-th dilation, we have a hypercube of side length $r$, whose $(1+r)^d$ lattice points are weighted by $a_j$ in direction~$j$, so
\[ L^\ba_{C_d,r}(q) = \prod_{j=1}^d \left( \sum_{i=0}^r  q^{ia_j}\right).\]
Summing each of these geometric series, we deduce
\begin{equation} \label{cube_formula} E^\ba_{C_d}(t,q) = \sum_{r=0}^\infty \left( \prod_{j=1}^d [r+1]_{q^{a_i}}\!\!\right) t^r,  \end{equation}
where for a positive integer $k$, 
\[ [k]_q = \frac{1-q^{k}}{1-q}
\]
is the corresponding $q$-integer. 

For symmetric weights $\ba=(1,\ldots, 1)$, we get
\begin{equation} \label{symm_cube_formula} E^{(1,\ldots, 1)}_{C_d}(t,q) = \sum_{r=0}^\infty [r+1]_{z}^d \, t^r=  \frac{\tilde A_d(t,q)}{\prod_{j=0}^d (1-q^jt)}.\end{equation}
Here the second equality is  a formula of MacMahon and Carlitz~\cite{MacMahon, Carlitz, Chow-Gessel}, and involves the two-variable refined Eulerian polynomial 
\[\tilde A_d(t,q) = \sum_{\sigma\in S_d} q^{\mathrm{maj}(\sigma)}t^{\mathrm{desc}(\sigma)}.
\] 
This enumerates permutations $\sigma\in S_d$ according to descent and major index, where the latter is defined as the sum of
all the descent positions in $\sigma$. 
The first few examples of this refined Eulerian polynomial are
\[ \tilde A_2(t, z)=1+ qt, \  \tilde A_3(t,q)= 1 + 2(1  + q)q  t  + q^3t^2, \]
and
\[
\tilde A_4(t,q) = 1 + (3 + 5 q + 3 q^2) qt  + (3  + 5  q + 3  q^2) q^3t^2 +  q^6 t^3. 
\]
At $q=1$, the refined Eulerian polynomial specialises to the original one, while \eqref{symm_cube_formula} specialises to \eqref{eq_cu}. 

To evaluate more cases of~\eqref{cube_formula}, consider first $d=2$, so work with $\Sigma=C_2$, the unit square. Recall that in this case, the algebra $R_{C_2}$ can be described as
\[ R_{C_2} \cong \C[x_{00},x_{01},x_{10},x_{11}]/ \langle x_{00}x_{11}-x_{01}x_{10}\rangle.
\]
For a refinement with weights $\ba=(a,b)$, 
the bigraded algebra $R_{C_2}$ is generated by four generators of bidegrees $(1,0)$, $(1,a)$, $(1,b)$ and $(1,a+b)$ with a single relation in bidegree $(2, a+b)$. We deduce
\begin{proposition} For arbitrary integer weights $\ba=(a,b)$, we have
\[E^{(a,b)}_{C_2}(t,q) = \frac{1-q^{a+b}t^2}{(1-t)(1-q^{a}t)(1-q^{b}t)(1-q^{a+b}t)} .\]
\label{prop_two_terms}
\end{proposition}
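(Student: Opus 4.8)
The plan is to compute the bigraded Hilbert series of $R_{C_2}$ directly from its presentation as a quotient of a free polynomial ring by a single relation, exactly as the proofs of Propositions~\ref{prop_simplex} and~\ref{prop:cross} proceed. First I would record that the free algebra $S_{C_2}\cong\C[x_{00},x_{01},x_{10},x_{11}]$, with the four variables placed in bidegrees $(1,0)$, $(1,a)$, $(1,b)$ and $(1,a+b)$, has bigraded Hilbert series
\[
H_{S_{C_2}}(t,q) = \frac{1}{(1-t)(1-q^{a}t)(1-q^{b}t)(1-q^{a+b}t)}.
\]
Next I would observe that the single generator $x_{00}x_{11}-x_{01}x_{10}$ of the ideal $I_{C_2}$ is bihomogeneous: both monomials have $t$-degree $2$ and $q$-degree $a+b$, so the relation sits in bidegree $(2,a+b)$, and it is a nonzerodivisor in the domain $S_{C_2}$. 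Therefore the short exact sequence of bigraded modules
\[
0 \longrightarrow S_{C_2}(-2,-(a+b)) \stackrel{\cdot(x_{00}x_{11}-x_{01}x_{10})}{\longrightarrow} S_{C_2} \longrightarrow R_{C_2} \longrightarrow 0
\]
gives $H_{R_{C_2}}(t,q) = (1-q^{a+b}t^2)\,H_{S_{C_2}}(t,q)$, which is precisely the claimed formula, since $E^{(a,b)}_{C_2}(t,q)$ is by definition this Hilbert series.

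The only genuine content beyond bookkeeping is the claim that the defining relation is bihomogeneous of the stated bidegree and remains a single relation after the refinement — i.e.\ that the ideal $I_{C_2}$ has no extra generators and the one it has is a nonzerodivisor. Bihomogeneity is immediate from $\bv\cdot\ba$ being additive under the multiplication $x_{\bv,r}\cdot x_{\bv',r'}=x_{\bv+\bv',r+r'}$: the two lattice-point sums $\mathbf{0}+(1,1)$ and $(1,0)+(0,1)$ agree, so $x_{00}x_{11}$ and $x_{01}x_{10}$ carry the same bidegree regardless of $\ba$. That $I_{C_2}=\langle x_{00}x_{11}-x_{01}x_{10}\rangle$ is already recorded in Example~\ref{ex_toric_cube}, and a single nonzero element of the polynomial domain $S_{C_2}$ is automatically a nonzerodivisor, so the Koszul-type resolution above is exact.

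I expect no real obstacle here: unlike the $d=3$ case, where the ideal $I_{C_3}$ has nine generators and a nontrivial Gorenstein resolution whose bigraded shifts must be tracked, the square has a complete-intersection (indeed hypersurface) coordinate ring, so the computation is a one-line application of the multiplicativity of Hilbert series along a short exact sequence. The mild point worth stating explicitly in the write-up is simply that the single quadric relation survives the bigrading as a bihomogeneous nonzerodivisor of bidegree $(2,a+b)$; everything else is formal.
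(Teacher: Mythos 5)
Your proposal is correct and matches the paper's own (very brief) argument: the paper simply notes that $R_{C_2}$ has four generators of bidegrees $(1,0)$, $(1,a)$, $(1,b)$, $(1,a+b)$ and a single relation in bidegree $(2,a+b)$, and deduces the formula — which is exactly your short-exact-sequence computation spelled out. No gaps.
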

Note that the right hand side does {\em not} simplify for general $\ba$. However, for $\ba=(1,1)$, there is an obvious cancellation of one term, and we get \[E^{(1,1)}_{C_2}(t,q) = \frac{1+ qt}{(1-t)(1- qt)(1-q^{2}t)} ,\]
the expected $q$-refinement of 
\[E_{C_2}(t) = \frac{1+t}{(1-t)^3}.
\]
On the other hand, for arbitrary weights, we deduce the following identity, a generalization of the classical MacMahon--Carlitz identity~\eqref{symm_cube_formula}.
\begin{corollary} For arbitrary integers $(a,b)$, we have 
\[ \sum_{r=0}^\infty  [r+1]_{q^{a}}  [r+1]_{q^{b}} t^r= \frac{1-q^{a+b}t^2}{(1-t)(1-q^{a}t)(1-q^{b}t)(1-q^{a+b}t)}.\]
\end{corollary}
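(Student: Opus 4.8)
The plan is to observe that both sides of the claimed identity are simply two descriptions of the same object, namely the bigraded Hilbert series $E^{(a,b)}_{C_2}(t,q)$. The left-hand side is, by definition, formula~\eqref{cube_formula} specialised to $d=2$ and weights $\ba=(a,b)$: since $L^{(a,b)}_{C_2,r}(q)=[r+1]_{q^a}[r+1]_{q^b}$, we have $\sum_{r\ge 0}[r+1]_{q^a}[r+1]_{q^b}\,t^r=E^{(a,b)}_{C_2}(t,q)$. The right-hand side is exactly the closed form supplied by Proposition~\ref{prop_two_terms}. So the corollary is just the equality of these two expressions, and once Proposition~\ref{prop_two_terms} is granted there is nothing more to do.

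If one prefers a self-contained derivation that does not invoke the toric description, I would argue directly. Write $[r+1]_{q^a}=(1-q^{a(r+1)})/(1-q^a)$ and likewise for $b$, and expand the summand as a combination of the four terms $1,\ q^{a(r+1)},\ q^{b(r+1)},\ q^{(a+b)(r+1)}$ divided by $(1-q^a)(1-q^b)$. Summing each term against $t^r$ over $r\ge 0$ is an elementary geometric series — $\sum_{r\ge 0}t^r=1/(1-t)$, $\sum_{r\ge 0}q^{a(r+1)}t^r=q^a/(1-q^a t)$, and so on — so the left-hand side becomes $\frac{1}{(1-q^a)(1-q^b)}$ times an explicit linear combination of $1/(1-t)$, $1/(1-q^a t)$, $1/(1-q^b t)$ and $1/(1-q^{a+b}t)$ with coefficients polynomial in $q^a,q^b$.

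It then remains to bring this combination over the common denominator $(1-t)(1-q^a t)(1-q^b t)(1-q^{a+b}t)$ and to check that, after the factor $(1-q^a)(1-q^b)$ cancels, the numerator collapses to $1-q^{a+b}t^2$. This final bookkeeping — confirming that all spurious powers of $t$ vanish — is the only genuine computation, and is where I would expect to spend the most care. A tidy way to organise it, avoiding brute expansion, is to note that the left-hand side is manifestly a rational function whose denominator divides $(1-t)(1-q^a t)(1-q^b t)(1-q^{a+b}t)$, and then to identify its numerator by comparing residues at the four simple poles $t=1,\ q^{-a},\ q^{-b},\ q^{-(a+b)}$ (and the value at $t=0$), which determines the degree-$\le 3$ numerator uniquely and matches it against $1-q^{a+b}t^2$.
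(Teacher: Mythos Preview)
Your first paragraph is exactly the paper's argument: the corollary is stated without proof because the left-hand side is~\eqref{cube_formula} at $d=2$ and the right-hand side is Proposition~\ref{prop_two_terms}, so equating them is the whole content. The additional self-contained derivation you sketch is correct and more than the paper offers.
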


Next let $\Sigma=C_3$, the unit cube. Recall that here we have 
\[ R_{C_3} \cong \C[x_{000},x_{001},x_{010},x_{011}, x_{100},x_{101},x_{110},x_{111}]/ I_{C_3}, 
\]
with $I_{C_3}$ a codimension $4$ ideal discussed in Example~\ref{ex_toric_cube}. For an arbitrary refinement with weights $\ba=(a,b,c)$, we get the following. 
\begin{proposition} \label{prop_three_terms}
\[E^\ba_{C_3}(t,q) =  \frac{N^\ba_{C_3}(t,q)}{(1-t) \left(1- q^at\right) \left(1-q^bt\right) \left(1-q^ct\right) \left(1-q^{a+b}t\right) \left(1-q^{a+c}t\right) \left(1-q^{b+c}t\right) \left(1-q^{a+b+c}t\right)},
\]
where the numerator is
\setlength{\arraycolsep}{2pt}

{\footnotesize
\[ \begin{array}{ll}
N^\ba_{C_3}(t,q)= & 1 \\ & -  \left(3 q^{a+b+c}+q^{2 a+b+c}+q^{a+2 b+c}+q^{a+b+2 c}+q^{a+b}+q^{a+c}+q^{b+c}\right)t^2 \\ &  + \left(2 q^{a+b+c}+2 q^{2 a+b+c}+2 q^{a+2 b+c}+2 q^{a+b+2 c}+2 q^{2 a+2 b+c}+2 q^{2 a+b+2 c}+2 q^{a+2 b+2 c}+2 q^{2(a+b+c)}\right) t^3 \\ & -\left(q^{2 a+2 b+c}+q^{2 a+b+2 c}+q^{a+2 b+2 c}+3 q^{2(a+ b+ c)}+q^{3 a+2 b+2 c}+q^{2 a+3 b+2 c}+q^{2 a+2 b+3 c}\right) t^4  \\ & + q^{3 (a+b+c)}t^6.\end{array} \]}
\end{proposition}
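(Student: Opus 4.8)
The plan is to obtain $N^\ba_{C_3}(t,q)$ as the bigraded Euler characteristic of the minimal free resolution of $R_{C_3}$ over $S_{C_3}$, and to use the Gorenstein structure of that resolution to cut the work down to one genuinely new computation. Recall from Example~\ref{ex_toric_cube} that $R_{C_3}=S_{C_3}/I_{C_3}$ is a codimension-$4$ Gorenstein ring whose minimal free resolution has the shape
\[ 0 \to F_4 \to F_3 \to F_2 \to F_1 \to F_0 = S_{C_3} \to R_{C_3} \to 0, \]
with $\rank F_1=\rank F_3=9$ and $\rank F_2=16$. Since $E^\ba_{C_3}(t,q)$ is by definition the bigraded Hilbert series of $R_{C_3}$, and the displayed denominator $(1-t)\prod(1-q^{\sigma}t)$ is exactly the bigraded Hilbert-series denominator of $S_{C_3}$ — whose eight variables $x_{\be}$, $\be\in\{0,1\}^3$, have bidegrees $(1,\be\cdot\ba)$ — we get $N^\ba_{C_3}(t,q)=\sum_{i=0}^4(-1)^i\sum_j t^{u_{ij}}q^{w_{ij}}$ where $(u_{ij},w_{ij})$ runs over the bidegrees of a homogeneous basis of $F_i$. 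Note that the resolution, hence these bidegrees, is intrinsic and does not depend on $\ba$ (the $\ba$-bigrading is a coarsening of the fine $\Z^8$-grading), so one uniform formula is to be expected.

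Next I would determine $F_1$. The ideal $I_{C_3}$ is the toric ideal of $(\P^1)^3\hookrightarrow\P^7$, minimally generated by the $9$ binomials $x_{\bv}x_{\bw}-x_{\bv'}x_{\bw'}$ with $\bv+\bw=\bv'+\bw'$ a lattice point of $2C_3$: six arise from the six lattice points of $2C_3$ with exactly two coordinates equal to $1$, and three from the centre $(1,1,1)$. Minimality is confirmed by a dimension count, e.g. $\dim(I_{C_3})_2=\binom 9 2-3^3=9$. A generator attached to a lattice point $\bu$ has bidegree $(2,\bu\cdot\ba)$, which yields the $t^2$-coefficient $-\bigl(3q^{a+b+c}+q^{a+b}+q^{a+c}+q^{b+c}+q^{2a+b+c}+q^{a+2b+c}+q^{a+b+2c}\bigr)$, matching the statement.

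I would then handle $F_4$ and $F_3$ by Gorenstein self-duality: $F_i\cong\Hom_{S_{C_3}}(F_{4-i},F_4)$, so it suffices to pin down the single bidegree $(\alpha,\beta)$ with $F_4=S_{C_3}(-\alpha,-\beta)$. The interior lattice points of $rC_3$ are exactly $(1,1,1)+(r-2)C_3$, whence $\omega_{R_{C_3}}\cong R_{C_3}\bigl(-2,-(a+b+c)\bigr)$ as a bigraded module; combining this with $\omega_{S_{C_3}}=S_{C_3}\bigl(-8,-4(a+b+c)\bigr)$ and $\omega_{R_{C_3}}=\operatorname{Ext}^4_{S_{C_3}}(R_{C_3},\omega_{S_{C_3}})$ forces $(\alpha,\beta)=\bigl(6,3(a+b+c)\bigr)$. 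So $F_4$ contributes $+q^{3(a+b+c)}t^6$, and if $F_1=\bigoplus_k S_{C_3}(-2,-d_k)$ then $F_3=\bigoplus_k S_{C_3}\bigl(-4,-(3(a+b+c)-d_k)\bigr)$, which converts the $t^2$-coefficient just computed into the claimed $t^4$-coefficient.

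The only remaining ingredient is the $t^3$-coefficient, i.e. the bigraded second Betti numbers (the $16$ cubic first syzygies among the $9$ quadrics). Self-duality already forces this coefficient to be palindromic under $q^e\mapsto q^{3(a+b+c)-e}$. To compute it I would extract the coefficient of $t^3$ from $N^\ba_{C_3}(t,q)=E^\ba_{C_3}(t,q)\prod(1-q^{\sigma}t)$ using \eqref{cube_formula}: this needs only $L^\ba_{C_3,r}(q)=\prod_j[r+1]_{q^{a_j}}$ for $r\le 3$ together with the first three elementary symmetric functions of the eight quantities $q^{\sigma}$ — a short mechanical calculation — after which the displayed $t^3$-coefficient is read off, and $N^\ba_{C_3}=1-A_2t^2+A_3t^3-A_4t^4+q^{3(a+b+c)}t^6$ is assembled. (Alternatively one could list the $16$ syzygies explicitly, or compute the fine-graded resolution and specialise.) The main obstacle is precisely this last step together with fixing the self-duality twist: without the Gorenstein symmetry one would need all of $F_2,F_3,F_4$, which is genuinely involved, so the care is concentrated in verifying that the $q$-twist of $F_4$ is exactly $q^{3(a+b+c)}$ and in collecting the $t^3$-coefficient correctly.
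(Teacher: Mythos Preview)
Your proposal is correct and follows the same approach the paper sketches: read off $N^\ba_{C_3}(t,q)$ as the alternating sum of bidegrees in the minimal free resolution of $R_{C_3}$ over $S_{C_3}$, which the paper simply declares ``known (or machine computable)'' and ``compatible with the bigrading'', omitting the details. Your explicit identification of the nine quadric generators, the use of Gorenstein self-duality to pin down $F_4$ (hence $F_3$) via the canonical twist $(6,3(a+b+c))$, and the series-based extraction of the $t^3$ coefficient are exactly a clean way to supply those omitted details.
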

\begin{proof} This can easily be obtained from the known (or machine computable) resolution of the ideal $I_{C_3}\lhd \C[x_{000},x_{001},x_{010},x_{011}, x_{100},x_{101},x_{110},x_{111}]$, 
which is compatible with the bigrading. We omit the details.  
\end{proof}

From this formula, it is easy to derive the specialisation $\ba=(1, 1, 1)$ corresponding to symmetric weights:
\[E^{(1,1,1)}_{C_3}(t,q) = \frac{1 + 2q(1  + q) t + q^3 t^2 }{(1-t)(1- qt)(1-q^2t)(1-q^3t)},\]
recovering the refined Eulerian polynomial $\tilde A_3(t,q)$ in the numerator. 
A slightly more general, but attractive, specialisation is $\ba=(a,1,1)$ which gives the following. 
\begin{corollary} For any integer $a$, we have
\[ \sum_{r=0}^\infty  [r+1]_{q^{a}}  [r+1]_{q}^2 \, t^r= \frac{ 1+ q \left(q^a+1\right)t-2  \left(q^2+q+1\right) q^{a+1}t^2+\left(q^a+1\right) q^{a+3}t^3 +q^{2 a+4}t^4}{(1-t) (1- qt) \left(1- q^2t\right) \left(1-q^at\right) \left(1- q^{a+1}t\right) \left(1-q^{a+2}t\right)}.\]
\end{corollary}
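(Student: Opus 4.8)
The plan is to recognise the left-hand side as a special value of the refined Ehrhart series of the unit cube and then to specialise Proposition~\ref{prop_three_terms}. Formula~\eqref{cube_formula} with $d=3$ and $\ba=(a,1,1)$ reads
\[ E^{(a,1,1)}_{C_3}(t,q) = \sum_{r=0}^\infty [r+1]_{q^a}\,[r+1]_q^2\, t^r, \]
which is precisely the series we must evaluate. So it suffices to substitute $(a,b,c)=(a,1,1)$ into the closed form of Proposition~\ref{prop_three_terms} and simplify.

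Under that substitution the denominator of Proposition~\ref{prop_three_terms} becomes
\[ (1-t)(1-q^a t)(1-qt)^2(1-q^{a+1}t)^2(1-q^2 t)(1-q^{a+2}t), \]
since setting $b=c=1$ collapses the pairs $q^b t=q^c t=qt$ and $q^{a+b}t=q^{a+c}t=q^{a+1}t$, while $q^{b+c}t=q^2 t$ and $q^{a+b+c}t=q^{a+2}t$. Comparing with the denominator claimed in the corollary, the entire content of the statement is that, after setting $b=c=1$, the numerator $N^{\ba}_{C_3}(t,q)$ of Proposition~\ref{prop_three_terms} is divisible by $(1-qt)(1-q^{a+1}t)$, with quotient the asserted degree-$4$ polynomial in $t$.

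The remaining work is the verification of this factorisation. Concretely, I would first carry out the substitution $b=c=1$ term by term in $N^{\ba}_{C_3}(t,q)$ to obtain an explicit polynomial in $t$ of degree $6$ with coefficients in $\Z[q,q^{-1}]$ (only the coefficients of $t^0,\dots,t^4$ and $t^6$ survive), and then either divide it by $1-(q+q^{a+1})t+q^{a+2}t^2$, or — what amounts to the same thing but is slightly less error-prone — multiply the candidate numerator $1+q(q^a+1)t-2(q^2+q+1)q^{a+1}t^2+(q^a+1)q^{a+3}t^3+q^{2a+4}t^4$ by $(1-qt)(1-q^{a+1}t)$ and check that the product matches the substituted $N^{\ba}_{C_3}$ coefficient by coefficient. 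This is routine bookkeeping with no conceptual obstacle, but it is the only non-trivial part of the argument, so some care is needed; the vanishing of the $t^1$ and $t^5$ coefficients of the product gives a useful internal consistency check.

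If one prefers a structural reason for the divisibility rather than a direct computation, it can be traced to the fact that for the weight $\ba=(a,1,1)$ the two generators $x_{001},x_{010}$ of $R_{C_3}$ carry the same bidegree $(1,1)$, and likewise $x_{101},x_{110}$ carry $(1,a+1)$; the bigrading-compatible resolution of $I_{C_3}$ used in Proposition~\ref{prop_three_terms} then produces the repeated denominator factors $(1-qt)^2$ and $(1-q^{a+1}t)^2$ together with compensating factors in the numerator, and the cancellation above is the concrete shadow of this. Either way, substituting back gives exactly the stated rational function, proving the corollary.
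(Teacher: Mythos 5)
Your proposal is correct and is exactly the paper's route: the corollary is stated there as the specialisation $\ba=(a,1,1)$ of Proposition~\ref{prop_three_terms}, combined with formula~\eqref{cube_formula} to identify the left-hand side, and you rightly isolate the only nontrivial point, namely that the specialised numerator is divisible by $(1-qt)(1-q^{a+1}t)$ so that the eight denominator factors collapse to six. The factorisation check you outline does go through coefficient by coefficient (with the $t^1$ and $t^5$ coefficients vanishing as you predict), so nothing is missing.
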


For $d\geq 4$, the graded algebra $R_{C_d}=S_d/I_{C_d}$ corresponding to the embedding $\nu_d\colon (\P^1)^d\hookrightarrow\P^{2^d-1}$ is getting more and more 
complicated. Already for $d=4$, one has a codimension $11$ embedding, defined by $55$ quadrics; the formula corresponding to that of Proposition~\ref{prop_three_terms}, even if computable, would have thousands of terms. 

Turning the ideas around, let us see whether we can learn anything about the ideal~$I_{C_d}$ of the embedding $\nu_d$ in general. For general~$d$, formula~\eqref{ringC3} becomes 
\[E_{C_d}(t) =  \frac{A_d(t)}{(1-t)^{d+1}} =\frac{1-  c_d t^2 + O(t^3)}{(1-t)^{2^d}},
\]
for a certain positive integer $c_d$. On the right hand side, we have the Hilbert series of a variety embedded in $\P^{2^d-1}$, whose ideal is cut out by $c_d$ quadrics. In the middle, remembering that all coefficients of $A_d(t)$ are positive, we have the Hilbert series of a finite length module over a free polynomial algebra in $d+1$ generators, corresponding to a free subalgebra of $R_{C_d}$ formed of elements of a regular sequence; this is basically a form of Noether normalization, remembering that the image variety is projective of dimension $d$. 
See~\cite[Section 4]{stanley_CM} for more details on these ideas in the present context. 

The fact that the formula of Proposition~\ref{prop_two_terms} does not simplify for 
general weights tells us that this procedure is not compatible with general bigradings even for $d=2$. The fact that there is simplification for symmetric weights suggests that it is compatible with that
particular bigrading. On the other hand, we have the general MacMahon--Carlitz formula
\[E^{(1,\ldots, 1)}_{C_d}(t,q) = \frac{\tilde A_d(t,q)}{\prod_{j=0}^d (1-q^jt)},\]
with numerator with positive coefficients only. This suggests that for all $d$, $R_{C_d}$ should be finite as a bigraded module over a bigraded subalgebra generated by a regular sequence of elements of bidegrees $(1,0), (1,1), \ldots, (1,d)$. 

To find this subalgebra explicitly, for $0\leq k \leq d$ let 
\[ y_k = \sum_{\substack{\bv \in C_d\cap\Z^d \\ \bv\cdot (1,...,1)=k}} x_{\bv,1}\in S_d.
\] 
These elements are bihomogeneous of degree $(1,k)$ for the symmetric bigrading. 

\begin{proposition} \label{conj:P1P1}
The images $[y_0], \ldots, [y_d]\in R_{C_d}$ form a regular sequence. 
The quotient
\[T_{C_d} = R_{C_d} /\langle [y_0], \ldots, [y_d]\rangle
\]
is a finite dimensional bigraded $\C$-vector space with Hilbert series equal to the refined Eulerian polynomial $\tilde A_d(t,q)$.
\end{proposition}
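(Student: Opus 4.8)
The plan is to exploit the identification of $R_{C_d}$ with the projective coordinate ring of $(\P^1)^d$ under the Segre-type embedding $\nu_d$, together with the bigrading coming from the symmetric weight $\ba=(1,\ldots,1)$. First I would observe that, abstractly, $R_{C_d}\cong\C[u_1,w_1,u_2,w_2,\ldots,u_d,w_d]^{(1,\ldots,1)}$, the subring of the polynomial ring on the $2d$ homogeneous coordinates of the $d$ factors spanned by monomials of multidegree $(1,1,\ldots,1)$; equivalently $R_{C_d,r}=\bigotimes_{i=1}^d H^0(\P^1,\O(r))$. Under this description, the variable $x_{\bv,1}$ attached to a vertex $\bv=(\epsilon_1,\ldots,\epsilon_d)\in\{0,1\}^d$ is the product $\prod_{i\colon\epsilon_i=0}u_i\prod_{i\colon\epsilon_i=1}w_i$, and its second degree $\bv\cdot(1,\ldots,1)$ records how many $w$'s appear. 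Hence
\[ y_k=\sum_{\substack{S\subseteq\{1,\ldots,d\}\\ |S|=k}}\ \prod_{i\notin S}u_i\ \prod_{i\in S}w_i = e_k(u_1/\,?\,\ldots)\ \text{— more precisely, the $k$-th elementary-symmetric-type expression}\]
in the pairs; cleanly, $\prod_{i=1}^d(u_i s + w_i) = \sum_{k=0}^d y_k\, s^{d-k}$ as a polynomial in an auxiliary variable $s$. So the $y_k$ are (up to reindexing) the coefficients of the ``total product'' $\prod_i(u_i s+w_i)$, a natural object.

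The key step is then the regular-sequence claim. Since $R_{C_d}$ is Cohen--Macaulay of Krull dimension $d+1$ (it is the coordinate ring of a projectively normal, indeed arithmetically Cohen--Macaulay, toric variety $(\P^1)^d$ of dimension $d$), a sequence of $d+1$ homogeneous elements is regular if and only if it is a homogeneous system of parameters, i.e.\ the only common zero of $[y_0],\ldots,[y_d]$ in $\Proj$-plus-vertex is the irrelevant point. Concretely, on the affine cone over $(\P^1)^d$—which is just the image of the map $\C^{2d}\to R_{C_d}^\vee$—the locus $y_0=\cdots=y_d=0$ pulls back to the locus in $\C^{2d}$ where $\prod_{i=1}^d(u_i s+w_i)\equiv 0$ as a polynomial in $s$. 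But a product of linear forms $u_i s+w_i$ in $\C[s]$ vanishes identically iff some factor is identically zero, i.e.\ $u_i=w_i=0$ for some $i$; and in that case the corresponding $\P^1$ factor of the point degenerates, so the point is not in the image of $(\C^2\setminus 0)^d$—it maps to the vertex of the cone. Therefore $V([y_0],\ldots,[y_d])=\{\text{vertex}\}$, so $[y_0],\ldots,[y_d]$ is an h.s.o.p., hence (by Cohen--Macaulayness) a regular sequence. This automatically gives that $T_{C_d}=R_{C_d}/\langle[y_0],\ldots,[y_d]\rangle$ is a finite-dimensional $\C$-vector space.

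It remains to compute the bigraded Hilbert series of $T_{C_d}$. Because $[y_0],\ldots,[y_d]$ is a regular sequence of elements of bidegrees $(1,0),(1,1),\ldots,(1,d)$ in the Cohen--Macaulay ring $R_{C_d}$, the Hilbert series multiplies:
\[ E^{(1,\ldots,1)}_{C_d}(t,q)\;=\;\frac{\mathrm{Hilb}_{T_{C_d}}(t,q)}{\prod_{j=0}^d(1-q^j t)}. \]
Comparing with the MacMahon--Carlitz formula~\eqref{symm_cube_formula}, $E^{(1,\ldots,1)}_{C_d}(t,q)=\tilde A_d(t,q)\big/\prod_{j=0}^d(1-q^jt)$, we conclude $\mathrm{Hilb}_{T_{C_d}}(t,q)=\tilde A_d(t,q)$, as claimed. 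One caveat to spell out: the regular-sequence divided-difference formula for Hilbert series requires only that the $[y_k]$ form a regular sequence (which the second paragraph establishes), so no separate positivity or genericity input is needed.

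I expect the main obstacle to be making the regular-sequence argument fully rigorous, specifically verifying that $[y_0],\ldots,[y_d]$ really do form a homogeneous system of parameters. The clean statement ``$\prod_i(u_i s+w_i)$ vanishes iff some factor does'' is elementary over a field, but one must be careful that the vanishing locus of the $y_k$ in $R_{C_d}$ (equivalently in the affine cone $\operatorname{Spec}R_{C_d}$) is being correctly related to the vanishing locus of $\prod_i(u_i s+w_i)$ in $\C^{2d}$ under the finite surjection $\C^{2d}\twoheadrightarrow\operatorname{Spec}R_{C_d}$ (the $(1,\ldots,1)$-Veronese/Segre contraction); the subtlety is that this map is finite but not injective, so one argues that a common zero of the $y_k$ in the cone lifts to a common zero upstairs, and then the elementary polynomial fact applies. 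Alternatively—and perhaps more transparently—one can bypass this by exhibiting directly that $R_{C_d}$ is module-finite over $\C[[y_0],\ldots,[y_d]]$: the monomial $\prod_i u_i^{r-k_i}w_i^{k_i}$ of degree $r$ satisfies, for each $i$, the monic relation expressing $u_i w_i$-type products via the $y_k$ after a suitable change of coordinates on each $\P^1$ (diagonalizing so that $y_k$ becomes, roughly, the $k$-th power sum or elementary symmetric function in new coordinates); I would choose whichever of the two formulations is shortest to write out cleanly.
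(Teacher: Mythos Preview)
Your argument is correct and follows essentially the same route as the paper: use Cohen--Macaulayness of $R_{C_d}$ to reduce the regular-sequence claim to showing that the $[y_k]$ have no common zero on $(\P^1)^d$, and then read off the bigraded Hilbert series of the quotient from the MacMahon--Carlitz identity. Your verification of the no-common-zero step is in fact a cleaner packaging of the paper's inductive argument: where the paper argues step by step that $y_0=0$ forces $a_1=0$, then $y_1=0$ forces $a_2=0$, and so on until $y_d=0$ gives a contradiction, you observe once and for all that $\prod_i(u_is+w_i)=\sum_k y_k\,s^{d-k}$, so the simultaneous vanishing of the $y_k$ forces some factor $u_is+w_i$ to vanish identically, contradicting $[u_i:w_i]\in\P^1$; the caveats you raise about lifting to $\C^{2d}$ are unnecessary once you work, as the paper does, directly with a point of $(\P^1)^d$ and any choice of homogeneous representatives.
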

\begin{proof}\!\!\footnote{We would like to thank Mateusz Michalek for suggesting this proof.} The graded ring $R_{C_d}$ is Cohen--Macaulay (in fact even Gorenstein). 
Thus in order to show that its elements $[y_0], \ldots, [y_d]\in R_{C_d}$ form a regular sequence, it is enough to show that the ideal they generate has codimension $(d+1)$. 
The latter is equivalent to the fact that the projective intersection of $\im\nu_{C_d}$ with the $d+1$ hyperplanes in $\P^{2^d-1}$ given by the 
conditions $y_k=0$ for $0\leq k \leq d$ is empty. Assume otherwise: 
choose a point $([a_1:b_1],[a_2:b_2],...,[a_d:b_d])\in(\P^1)^d$ whose image under $\nu_{C_d}$ is on the vanishing locus of all these hyperplanes. 
The vanishing of the first linear form tells us that the product of all the $a_i$ is zero; we may assume $a_1=0$, and hence $b_1\neq 0$.
The vanishing of the second linear form tells us that the sum of $d$-products of all but one of the $a_i$ and the complimentary $b_j$ is~0. Given that $a_1=0$, we deduce
$b_1a_2a_3\cdots a_d=0$, so without loss of generality $a_2=0$ and so $b_2\neq 0$. Continuing in this way, we get that on the vanishing locus of all the hyperplanes, all the
$a_i$ coordinates are zero. But the vanishing of the last linear form tells us that the product of all the $b_i$ is also zero, which is a contradiction. This proves the first statement. 

It follows in particular that $R_{C_d} /\langle [y_0], \ldots, [y_d]\rangle$ is a finite dimensional bigraded vector space. By the MacMahon--Carlitz formula, its bigraded Hilbert series 
indeed equals the refined Eulerian polynomial $\tilde A_d(t,q)$.
\end{proof}


\end{document}